\numberwithin{equation}{section}
\definecolor{darkblue}{rgb}{0,0,0.5}
\newdimen\margin
\def\textno#1&#2\par{
	\margin=\hsize
	\advance\margin by -4\parindent
	\setbox1=\hbox{\sl#1}
	\ifdim\wd1 < \margin
	$$\box1\eqno#2$$
	\else
	\bigbreak
	\hbox to \hsize{\indent$\vcenter{\advance\hsize by -3\parindent
			\it\noindent#1}\hfil#2$}
	\bigbreak
	\fi}
\newtheorem{theorem}[algorithm]{Theorem}
\newtheorem{lemma}[algorithm]{Lemma}
\theoremstyle{definition}
\newtheorem{defin}[algorithm]{Definition}
\def\lateproof#1{\removelastskip\penalty55\medskip\noindent\begin{stepenv}\end{stepenv}{\bf Proof of #1. }} 
\def\noproof{{\unskip\nobreak\hfill\penalty50\hskip2em\hbox{}\nobreak\hfill%
		$\square$\parfillskip=0pt\finalhyphendemerits=0\par}\goodbreak}
\def\endproof{\noproof\bigskip}
\newcounter{stepenv}
\newenvironment{stepenv}[1][]{\refstepcounter{stepenv}}{}
\newcounter{step}[stepenv]
\newcounter{substep}[step]
\renewcommand{\thesubstep}{\thestep.\arabic{substep}}
\newcounter{claim}[stepenv]
\newcommand{\eul}{{e}}
\newcommand{\ind}{\mathrm{ind}}
\def\sm{\setminus}
\def\COMMENT#1{}
\def\TASK#1{}
\let\TASK=\footnote             
\begin{document}

	\title{Short proofs for long induced paths}

	\author{
		Nemanja Dragani\'c \thanks{Department of Mathematics, ETH, 8092 Z\"urich, Switzerland.
			Email: \href{mailto:nemanja.draganic@math.ethz.ch}{\nolinkurl{nemanja.draganic@math.ethz.ch}}. Research supported in part by SNSF grant 200021\_196965.
		}		
		\and
		Stefan Glock \thanks{Institute for Theoretical Studies, ETH, 8092 Z\"urich, Switzerland.
			Email: \href{mailto:dr.stefan.glock@gmail.com}{\nolinkurl{dr.stefan.glock@gmail.com}}. Research supported by Dr.~Max R\"ossler, the Walter Haefner Foundation and the ETH Z\"urich Foundation.}
		\and
		Michael Krivelevich \thanks{School of Mathematical Sciences, Raymond and Beverly Sackler Faculty of Exact Sciences, Tel Aviv University, Tel Aviv, 6997801, Israel. Email: \href{mailto:krivelev@tauex.tau.ac.il}{\nolinkurl{krivelev@tauex.tau.ac.il}}. Research supported in part by USA-Israel BSF grant 2018267, and by ISF grant 1261/17.}
	}
	
	\date{}
	
	\maketitle

	\begin{abstract} 
		We present a modification of the Depth first search algorithm, suited for finding long induced paths. We use it to give simple proofs of the following results. We show that the induced size-Ramsey number of paths satisfies $\hat{R}_{\ind}(P_n)\leq 5\cdot 10^7n$, thus giving an explicit constant in the linear bound, improving the previous bound with a large constant from a regularity lemma argument by Haxell, Kohayakawa and {\L}uczak. We also provide a bound for the $k$-colour version, showing that $\hat{R}_{\ind}^k(P_n)=O(k^3\log^4k)n$. Finally, we present a new short proof of the fact that the binomial random graph in the supercritical regime, $G(n,\frac{1+\varepsilon}{n})$, contains typically an induced path of length $\Theta(\varepsilon^2) n$.
	\end{abstract}

	\section{Introduction}
	In this article, we give short proofs for two well known problems regarding finding long induced paths in random graphs.
	
	The first problem concerns the induced size-Ramsey number of paths. For a graph $H$ we define the \emph{$k$-colour induced size-Ramsey number} of $H$, denoted by $\hat{R}_{\ind}^k(H)$, as the smallest number $m$ such that there exists a graph $G$ on $m$ edges such that for every $k$-colouring of the edges of $G$, there is a monochromatic copy of $H$ which is an induced subgraph of~$G$. In 1987, Graham and R\"{o}dl~\cite{graham1987numbers} asked if the induced size-Ramsey numbers of paths $P_n$ are linear in~$n$ (for any fixed number of colours). This was confirmed by Haxell, Kohayakawa and {\L}uczak~\cite{HKL:95}, who showed that $\hat{R}_{\ind}^k(P_n)\leq c_k n$ for every fixed~$k$.
	Their proof is quite technical and is based on the regularity lemma, hence the derived constants $c_k$ are astronomically large. We revisit this problem and give a short and rather simple proof of the fact that the induced size-Ramsey numbers of paths are linear. Moreover, we obtain an explicit absolute constant for the $2$-colour version, and give a bound polynomial in $k$ for $c_k$ in the general case, for any fixed number of colours~$k$.
	
	\begin{theorem}\label{thm:ramsey}
		$\hat{R}_{\ind}(P_n)\leq 5\cdot 10^{7}n$ for all large enough $n$.
	\end{theorem}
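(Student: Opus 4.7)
The plan is to let the host graph be a binomial random graph $G = G(N,p)$ with $N = Kn$ vertices and edge probability $p = C/N$, for absolute constants $K,C$ chosen so that $KC/2 \leq 5\cdot 10^7$; then $|E(G)|$ concentrates near $KCn/2 \leq 5\cdot 10^7 n$, and it will suffice to show that with positive probability $G$ is induced-Ramsey for $P_n$.

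The core tool is a modification of depth-first search tailored for induced monochromatic paths. Fix any $2$-colouring of $E(G)$; by symmetry we may assume the red edges form at least half of $E(G)$. The algorithm maintains a stack $S$ whose contents form an induced red path in $G$, together with disjoint sets $T$ (discarded) and $U$ (unvisited) with $S\cup T\cup U = V(G)$. At each step, if $S=\emptyset$ we push any $v\in U$; otherwise, letting $s$ denote the top of $S$, we scan $U$ for a vertex $v$ such that the edge $sv$ is red in the colouring \emph{and} $v$ has no other edges in $G$ (of either colour) to $S$. If such $v$ exists we push it onto $S$; otherwise we pop $s$ into $T$. This invariant keeps $S$ always an induced red path in $G$, so it will suffice to show that $|S|$ reaches $n$ at some moment during the run.

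The key quantitative step is the following calculation: when the current top is $s$ and $|S|\leq n$, the expected number of valid extensions within $U$ is at least $|U|\cdot(p/2)\cdot(1-p)^{|S|-1}$, which for $|U|$ a constant fraction of $N$ is of order $(C/4)e^{-C/K}$; by choosing $C$ moderately large and $C/K$ small, we keep this comfortably above $1$, so on average successful pushes outpace pops, and standard concentration drives $|S|$ above $n$ before $U$ is exhausted. A symmetric variant, or running the process with two stacks in parallel, allows us to target whichever colour happens to be the majority.

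The main obstacle is that the $2$-colouring is chosen adversarially after $G$ is sampled, so the factor $p/2$ for ``edge $sv$ is red'' cannot be obtained from the naive principle of deferred decisions on the colouring. The fix is to expose not the colouring but the graph $G$ itself edge by edge, and to isolate a deterministic pseudo-random property of $G(N,p)$ that holds with high probability and drives the modified DFS analysis uniformly over every $2$-colouring --- for instance, that every pair of large disjoint vertex sets $A,B\subseteq V(G)$ satisfies $e_G(A,B) = (1\pm o(1))|A||B|p$, forcing the majority colour to contribute $\Omega(|A||B|p)$ edges between them. The constants $K,C$ are then calibrated to make this pseudo-randomness robust enough while respecting the budget $KC/2 \leq 5\cdot 10^7$; careful bookkeeping should yield the claimed bound.
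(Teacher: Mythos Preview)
Your high-level framework --- random host $G(N,C/N)$, modified DFS that keeps the stack an induced monochromatic path, then pseudo-random properties of $G$ robust to any $2$-colouring --- matches the paper. But the actual analysis you sketch does not go through, and the fix you propose is aimed at the wrong property.

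The central gap is the ``pushes outpace pops via concentration'' step. Once $G$ is sampled and the adversary has coloured it, the DFS is a \emph{deterministic} process; there is nothing to concentrate. Your formula $|U|\cdot(p/2)\cdot(1-p)^{|S|-1}$ is doubly illegitimate: the factor $p/2$ pretends the colour of an edge is random, and the factor $(1-p)^{|S|-1}$ pretends the edges from $v$ to $S\setminus\{s\}$ are still unexposed, whereas many of them were already queried earlier in the run. You recognise the first issue, but the proposed remedy --- that $e_G(A,B)=(1\pm o(1))|A||B|p$ for all large $A,B$ --- does not repair either point. At a given step you need information about edges from a \emph{single} vertex $s$, and about non-edges from candidate $v$'s into the small set $S$; a large-set bipartite density estimate says nothing about these.

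What the paper does instead is to separate the two failure modes in the DFS. It keeps \emph{two} discard sets: $S_1$ for stack vertices that get popped because they have no further red neighbour in $U$, and $S_2$ for vertices $t\in U$ that are red-adjacent to the top of the stack but also $G$-adjacent to some other stack vertex. Each vertex entering $S_2$ certifies two $G$-edges inside the current $S_1\cup S_2\cup U$, so $|S_2|$ is controlled by a \emph{local sparsity} property of $G$ (every set of size $\le s_1+s_2+\ell$ spans fewer than $2s_2$ edges). To control $|S_1|$ one needs that in the red graph every $s_1$-set has external neighbourhood of size at least $s_2+\ell$; this expansion is obtained not from bipartite density but from a preliminary \emph{cleaning step}: take the majority-colour graph and iteratively delete vertices of degree $\le 16$, leaving a subgraph $G'$ of minimum degree $\ge 17$, and then combine this minimum degree with local sparsity and a small-set bipartite \emph{upper} bound on $e_G(S,T)$ to force large neighbourhoods. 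Your proposal has neither the two-bin accounting nor the cleaning step, and the pseudo-random property you single out (tight two-sided density between large sets) is not the one that drives the argument; what is needed are \emph{upper} bounds on edges inside and between \emph{small} sets.
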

	\begin{theorem}\label{thm:multicolour}
		$\hat{R}^k_{\ind}(P_n)=O(k^3\log^4k)n$.
	\end{theorem}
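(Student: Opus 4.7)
The plan is to apply the modified DFS algorithm from the proof of Theorem~\ref{thm:ramsey} to an appropriately scaled random host graph. We take $G\sim G(N,p)$ with $N$ and $p$ chosen so that $|E(G)|=\Theta(k^3\log^4 k)\,n$ with high probability, and such that the pigeonhole lower bound $Np/k$ on the average degree of the densest colour class is large enough to make DFS extensions likely. By standard Chernoff and union bound arguments, with high probability $G$ enjoys the pseudorandom properties (uniform vertex degrees, correct edge counts in all small vertex subsets, small codegrees) that the DFS analysis requires, and we condition on such a $G$.

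Given any $k$-colouring $\chi:E(G)\to[k]$, pigeonhole produces a colour $i$ with $|\chi^{-1}(i)|\geq |E(G)|/k$; set $H=(V(G),\chi^{-1}(i))$. We run the modified DFS from Theorem~\ref{thm:ramsey} on the pair $(H,G)$: the stack is always a path in $H$ that is induced in $G$, and to extend the top vertex $v$ we push an unexplored $H$-neighbour $u$ of $v$ that has no $G$-edges to the rest of the stack, popping $v$ if no such $u$ exists. The resulting output is a monochromatic path in $H$ which is induced in $G$.

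The core step is to show that the DFS must, at some point, produce a stack of size $\geq n$. Following Theorem~\ref{thm:ramsey}, we argue by contradiction: if the stack never reaches size $n$, then every popped vertex $v$ has all its unexplored $H$-neighbours blocked by a $G$-chord to the current stack, and summing over popped vertices forces more (edge, chord)-configurations than the pseudorandom graph $G$ can support. The main obstacle is calibrating $N$ and $p$ so that the two competing requirements --- that the top of the stack has enough $H$-neighbours in the unexplored set (forcing $Np/k$ to be large), and that such neighbours typically have no chord back to the stack (forcing $pn$ to be small) --- can both be met uniformly across all colourings. The bound $k^3\log^4 k$ arises from balancing this trade-off, together with the polylogarithmic slack needed so that Chernoff-type concentration holds across all necessary vertex subsets.
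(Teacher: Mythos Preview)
Your approach aligns with the paper's in spirit, but as written it has a real gap: you run the DFS on the raw majority colour class $H$, whereas the paper first \emph{cleans} $H$ by iteratively deleting vertices of degree at most $d/4$ (where $d=c\log k/2$ is a lower bound on the average degree of $H$), producing a subgraph $G'$ with $\delta(G')>d/4$. This is not cosmetic. Without it the argument breaks: $H$ can have a positive fraction of vertices that are isolated (or of degree $1,2$) in that colour, and such vertices get pushed and immediately popped to $S_1$ without generating any obstruction. Your stated contradiction --- ``every popped vertex $v$ has all its unexplored $H$-neighbours blocked by a $G$-chord'' --- misdescribes the algorithm: a vertex is popped to $S_1$ precisely when it has \emph{no} $H$-neighbour left in $T$, not when its neighbours are blocked (blocked vertices go to the separate pile $S_2$). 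So $|S_1|$ can grow large without forcing any (edge,~chord)-configurations whatsoever, and your counting contradiction never materialises.

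The paper's argument instead goes through Theorem~\ref{thm:local considerations}, which requires two distinct pseudorandom inputs: a local sparsity bound on $G$ (this controls $|S_2|$ via property~\ref{fact:2k edges}), and an \emph{expansion} bound on $G'$ saying that every $s_1$-set has at least $s_2+\ell$ external $G'$-neighbours (this controls $|S_1|$ via property~\ref{fact:bipartite}). It is exactly this expansion property that fails for the uncleaned $H$ and that the minimum-degree cleaning buys. Once $\delta(G')\ge c\log k/8$, the expansion follows from the edge-count properties of Lemma~\ref{auxiliary-multicolour}, and the same local sparsity forces $|V(G')|\ge s_1+s_2+\ell$, so Theorem~\ref{thm:local considerations} applies directly with $\ell=s_1=\frac{n}{c^3k\log^3 k}$ and $s_2=\frac{n}{c^3k\log^2 k}$. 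Your sketch should insert the cleaning step and replace the vague configuration count with this two-case analysis ($|S_1|=s_1$ versus $|S_2|=s_2$).
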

	
	The second classical problem we address is finding a linear sized induced path in a binomial random graph $G(n,p)$ in the supercritical regime, i.e.~when $p=\frac{1+\varepsilon}{n}$ for a sufficiently small positive constant~$\varepsilon$. 
	We give a short alternative proof of the following result, originally due to Suen~\cite{suen:92}.
	
	\begin{theorem}\label{thm:supercritical}
		There exists a constant $\varepsilon_0>0$ such that for all positive $\varepsilon<\varepsilon_0$, the random graph $G\sim G(n,\frac{1+\varepsilon}{n})$ with high probability\footnote{that is, with probability tending to $1$ as $n\to\infty$} (whp) contains an induced path of length $\frac{\varepsilon^2n}{5}$.
	\end{theorem}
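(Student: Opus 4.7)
The plan is to adapt the Krivelevich--Sudakov DFS argument for paths in supercritical $G(n,p)$, modifying the DFS so that the stack always forms an \emph{induced} path; this then yields an induced path of length at least $\varepsilon^2 n/5$ directly. Maintain the partition $V = S \cupdot U \cupdot T$, with $U$ the stack. When the top $u$ tries to extend into $T$ with a vertex $v$, expose the edge $uv$; if absent, record $v$ as tried from $u$. If $uv \in E(G)$, additionally expose every edge from $v$ to $U \setminus \{u\}$ (the chord check). Push $v$ onto $U$ only if every chord-check edge is absent; otherwise record $v$ as tried from $u$ but leave $v$ in $T$. By construction $U$ is always an induced path, and the key DFS invariant survives: at any moment, for every $s \in S$ and $t \in T$, the edge $st$ has been exposed. (When $s$ was popped from $U$, every vertex still in $T$ and untried from $s$ was gone; since $T$ only shrinks over time, each current $t \in T$ was tried from $s$ before $s$ was popped.)

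Set $k = \lceil \varepsilon^2 n/5 \rceil$ and suppose, towards a contradiction, that the stack never reaches size $k$. At the first moment $M$ with $|S_M| = \lfloor(n-k)/2\rfloor$ one has $|T_M| \geq (n-k)/2$, so the invariant supplies at least $(n-k)^2/4$ exposed $S$--$T$ edges. Since each exposed edge is a fresh $\mathrm{Bernoulli}(p)$ at its moment of exposure, a standard Chernoff bound shows that whp the number present in $G$ is at least $(1-o(1))(1+\varepsilon)n/4$. On the other hand, every present $S$--$T$ edge $st$ arises from a \emph{chord-blocked} try: at the time $\tau$ when $s$ tried $t$ we had $st \in E(G)$ but some $w \in U^\tau \setminus \{s\}$ also satisfied $wt \in E(G)$, and by DFS this $w$ lies on the ancestor path of $s$. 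Hence the number of present $S$--$T$ edges is at most the number of ``active cherries'' $(s,w,t)$ with $w$ a DFS-ancestor of $s$ and $st, wt \in E(G)$.

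Since the DFS depth never exceeds $k-1$, the total number of ordered ancestor pairs $(s,w)$ is at most $kn$; for each such pair the expected size of $N_G(s)\cap N_G(w)$ is at most $(n-2)p^2 = (1+o(1))(1+\varepsilon)^2/n$, giving
\[
    \mathbb{E}[\text{active cherries}] \leq (1+o(1))(1+\varepsilon)^2 k.
\]
For $\varepsilon$ smaller than a fixed constant this is comfortably below $(1+\varepsilon)n/4$, contradicting the Chernoff lower bound and thereby forcing the stack to reach size $k$, producing an induced path of length $\varepsilon^2 n/5$.

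The main obstacle is promoting the two estimates from expectation to \emph{whp}, since the random DFS-tree structure is coupled with the random edges feeding into the common-neighbour counts. I would handle this with a Chebyshev-type variance argument: active cherries decompose as a sum over the at most $kn$ ancestor pairs of $|N_G(s) \cap N_G(w)|$, each itself a sum of near-independent $\mathrm{Bernoulli}(p^2)$ variables concentrating on a mean of order $1/n$. A small additional bookkeeping step handles edges which happen to be re-queried across different chord-checks; the number of such ``stale'' events is deterministically bounded by the number of cherries of $G$, which is $(1+o(1))(1+\varepsilon)^2 n$ whp, and so affects neither the leading constants nor the contradiction.
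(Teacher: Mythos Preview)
Your lower bound on $e_G(S_M,T_M)$ is the fatal step. You argue that the $(n-k)^2/4$ pairs in $S_M\times T_M$ have each been exposed as a fresh $\mathrm{Bernoulli}(p)$, and conclude by Chernoff that roughly $p\cdot(n-k)^2/4\approx(1+\varepsilon)n/4$ of them are edges. But the \emph{set} $S_M\times T_M$ is chosen adaptively by the algorithm, based on the answers to earlier queries, so Chernoff does not apply to this subcollection. Indeed, the whole point of DFS is that $S$--$T$ pairs are biased towards non-edges: in ordinary DFS there are \emph{zero} edges between $S$ and $T$, and in your variant the only $S$--$T$ edges are precisely the chord-blocked tries. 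So your ``lower bound'' and your ``upper bound'' are bounding the very same quantity, and there is no reason whatsoever for it to be of order $n$ --- in fact your own cherry heuristic says it should be $O(\varepsilon^2 n)$. The contradiction evaporates. (There is a secondary issue that tries in your variant need not be fresh, since a pair $(w,t)$ exposed in a chord check can be re-queried later when $w$ reaches the top of the stack; and your expectation bound on active cherries treats the random ancestor relation as if it were independent of the edge indicators. But these are moot once the lower bound fails.)

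The paper's argument is structurally different in one key place: when $t$ is chord-blocked it is removed from $T$ and placed in a separate bin $S_2$, rather than left in $T$. This single change has two effects. First, every Step~2 query is now to a genuinely fresh pair, so the Krivelevich--Sudakov counting (Chernoff on the first $N_0=\varepsilon n^2/2$ queries) goes through verbatim and yields a lower bound on $|U\cup S_1\cup S_2|$, hence on $|S_1|$. Second, the cherry-type reasoning you attempt is replaced by the clean deterministic observation that every vertex entering $S_2$ contributes at least one to the excess of its component; since $\mathrm{exc}\bigl(G(n,\tfrac{1+\varepsilon}{n})\bigr)\le\varepsilon^3 n$ whp, this bounds $|S_2|$ outright, with no coupling between the tree structure and the edge indicators to worry about. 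The contradiction then comes from $|S_1|\cdot|T|$ exceeding $N_0$, not from a bound on $e_G(S,T)$.
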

	We remark that the dependency on $\varepsilon$ is optimal: it is known that even the length of a (not necessarily induced) path is whp $O(\varepsilon^2 )n$ (see, e.g., \cite{JLT:11}). Let us also note that Suen's result is slightly stronger in the sense that $1/5$ can be replaced by any constant smaller than~$1$.
	
	We give some background on those two problems and prove the results above in Sections~\ref{sec:Ramsey} and~\ref{sec:supercritical}.
	The common tool which we use in our proofs is a modified version of the Depth first search (DFS) graph search algorithm. By nature, DFS is very suitable for finding long paths in graphs.
	Our version is tailored for finding long \emph{induced} paths, specifically in graphs with certain local density conditions, hence it comes in handy for applications in random graphs. 
	We will first present the proof of Theorem~\ref{thm:supercritical}, where the DFS algorithm is used directly in the random graph $G(n,p)$. Subsequently, in the proofs of Theorems~\ref{thm:ramsey} and~\ref{thm:multicolour}, we apply it in a monochromatic subgraph of a random graph.
	Throughout, we treat large numbers like integers whenever this has no effect on the argument.

	\section{DFS for induced paths}\label{sec:DFS}
	In the standard DFS algorithm, we explore the vertices one by one, always following one branch as far as possible, before we start backtracking.
	Given a graph $G$, the idea is to keep track of three sets of vertices $U,T,S$, where $T$ is the set of unexplored vertices, $S$ is the set of vertices whose exploration is complete, and the remaining vertices $U$ are kept in a stack. At every step we look at the vertex $u$ which is the last one added to $U$, and try to find a neighbour $t$ of $u$ in $T$. If we succeed, we move $t$ to $U$, and if not, we move $u$ to $S$. It is easy to see that the vertices in $U$ contain a spanning path in $G[U]$. In our modified version of the DFS algorithm (described below), after finding $t$, we also check if $t$ has any other neighbours in $U$ except $u$, and if so, we move $t$ to $S$. This ensures that $U$ always spans an induced path in $G$, and makes the algorithm suitable for finding long induced paths in sparse expanders. 
	
	More precisely, our goal is the following. Given two graphs $G',G$ on the same vertex set and with $G'\subseteq G$, we want to find a long induced path in $G$, whose edges are all in $G'$. When $G'=G$ this boils down to finding a long induced path in $G'$, but with the Ramsey question mentioned before in mind, it will be convenient for us to formulate the algorithm with two input graphs, so that in this specific instance, $G'$ will be a monochromatic subgraph of the coloured host graph $G$.
	In applications we usually run the algorithm up to a certain stage, and by analyzing it we conclude that the input graphs contain a suitable induced path. 
	
	The algorithm is a graph search algorithm which visits all the vertices in the following manner. As input, it receives graphs $G'=(V,E')$ and $G=(V,E)$ with $E'\subseteq E$, and an ordering $\pi$ of $V$. The algorithm maintains four sets of vertices $U,T,S_1$ and $S_2$. The set $T$ is the set of unvisited vertices, $S_1$ and $S_2$ are the sets of discarded vertices, while $U=V\sm(T\cup S_1\cup S_2)$ is the set of remaining vertices which are kept in a stack (the last vertex to enter $U$ is the first to leave). At every stage of the algorithm $U$ will induce a path in $G$ with all edges belonging also to $G'$. In the beginning we set $S_1=S_2=U=\emptyset$ and $T=V$, and we stop when $U=T=\emptyset$.
	The algorithm is carried out in rounds, and in each round we proceed as follows. 
	
	\bigskip
	
	\noindent \textbf{Beginning of round}
	
	\begin{enumerate}
		\item\label{step1} If $U$ is empty, we take the first vertex in $T$ according to $\pi$, remove it from $T$ and push it to $U$. 
		\item\label{step2} Otherwise, let $u$ be the vertex on the top of the stack in $U$. Now we query $T$ for vertices $t\in T$ such that $(u,t)$ is an edge in $E'$, by scanning $T$ according to the ordering $\pi$. We have one of the following scenarios, given by Steps~\ref{step3} and~\ref{step4}.
		
		\item\label{step3} If an appropriate $t$ is found, we query $U\sm\{u\}$ for vertices 
		$u'\in U\sm\{u\}$ such that $(t, u')\in E$ by scanning $U \sm \{u\}$ according to the ordering $\pi$.
		\begin{enumerate}
			\item If all the answers are negative, remove $t$ from $T$ and push it to $U$.
			\item If we get at least one positive answer, remove $t$ from $T$ and add it to $S_2$.
		\end{enumerate}
		\item\label{step4} If no such $t$ is found, remove $u$ from $U$ 
		and add it to $S_1$.
	\end{enumerate}
	
	\noindent \textbf{End of round}\\
	\bigskip
	
	In order to explore all pairs of vertices in the graph, for technical reasons, we also query all the pairs in $V$ which were not queried before for being in~$G'$ (in the first paragraph of the proof of Theorem~\ref{thm:supercritical} it becomes apparent why we want to query all pairs). This completes the algorithm.
	
	The following properties of the algorithm will play an important role in analyzing it in the sections that follow. 
	
	\begin{enumerate}[label= (\Alph*)]
		\item\label{fact:bipartite} At every point all pairs between $S_1$ and $T$ have been queried, and none of them is in $E'$.
		\item\label{fact:move from T} 
		Every time we enter Step~\ref{step3}, the size of $U\cup S_1\cup S_2$ increases by $1$, and it never decreases.
		\item\label{fact:2k edges} At every point the number of edges in $G[U\cup S_1\cup S_2]$ is at least $2|S_2|$. 
		\item\label{fact:induced path}  At every point of the algorithm, $U$ induces a path in $G$ with all its edges being also in $G'$.
	\end{enumerate}
	Properties~\ref{fact:bipartite},~\ref{fact:move from T} and~\ref{fact:induced path} hold for immediate reasons, while~\ref{fact:2k edges} holds since every vertex which lands in $S_2$ has at least two neighbours in $G$ in the current $U$, and all the vertices in this current $U$ either stay at $U$ or go to $S_1$.
	
	Now we give a result which shows that given two graphs $G'\subseteq G$, if $G$ satisfies a local density condition, and $G'$ has a certain expansion property, then $G'$ contains a long path induced in $G$.
	It follows from analyzing our modified DFS algorithm, and it will be used to prove our Ramsey results. 
	
	Given a graph $G$ and a subset of vertices $S$, we denote by $N_{G}(S)$ the \emph{external} neighbourhood of $S$, that is, the set of vertices outside $S$ which have a neighbour in~$S$.
	\begin{theorem}\label{thm:local considerations}
		Let $G,G'$ be graphs on the same vertex set with $G'\subseteq G$, and let $s_1,s_2$ and $\ell$ be positive integers such that for every set of vertices $S$ the following hold:
		\begin{itemize}
			\item If $|S|< s_1+s_2+\ell$, then $|E(G[S])|< 2s_2$;
			\item If $|S|=s_1$, then $|N_{G'}(S)|\geq s_2+\ell$.
		\end{itemize}
		If $|V(G)|\geq \ell+s_1+s_2$, then $G'$ contains a path of length $\ell$ which is an induced path in $G$.  
	\end{theorem}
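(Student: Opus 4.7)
The plan is to run the modified DFS on input $(G', G)$ with an arbitrary vertex ordering $\pi$ and show that at some moment during the execution the stack $U$ contains at least $\ell+1$ vertices. Property~\ref{fact:induced path} will then immediately provide a $G$-induced path of length $\geq\ell$ whose edges lie in $G'$, and taking an induced subpath on $\ell+1$ consecutive vertices of $U$ gives one of length exactly $\ell$.

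Suppose for contradiction that $|U|\leq\ell$ throughout the execution. Since $|U\cup S_1\cup S_2|$ is nondecreasing, starts at $0$, changes by at most $1$ per round (property~\ref{fact:move from T} together with inspection of Steps~\ref{step1}--\ref{step4}), and ends at $|V(G)|\geq s_1+s_2+\ell$, there is a moment $t^\ast$ at which $|U\cup S_1\cup S_2|=s_1+s_2+\ell-1$, immediately before this quantity ticks up to $s_1+s_2+\ell$. At $t^\ast$, the first hypothesis applied to $S:=U\cup S_1\cup S_2$ yields $|E(G[S])|<2s_2$, and property~\ref{fact:2k edges} then forces $|S_2|\leq s_2-1$, so $|U|+|S_1|\geq s_1+\ell$ and hence $|S_1|\geq s_1$ (using the contradiction hypothesis $|U|\leq\ell$). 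Pick any $S'\subseteq S_1$ with $|S'|=s_1$; the second hypothesis gives $|N_{G'}(S')|\geq s_2+\ell$, while property~\ref{fact:bipartite} applied to $S'\subseteq S_1$ ensures $N_{G'}(S')\cap T=\emptyset$, so $N_{G'}(S')\subseteq U\cup (S_1\setminus S')\cup S_2$. Counting sizes yields $|U|+|S_2|+|S_1|-s_1\geq s_2+\ell$, i.e., $|U\cup S_1\cup S_2|\geq s_1+s_2+\ell$, contradicting the defining property of $t^\ast$.

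The only real subtlety is the choice of $t^\ast$: at earlier moments $|S_1|$ might be too small to invoke the expansion hypothesis, while at later moments $|U\cup S_1\cup S_2|\geq s_1+s_2+\ell$ would invalidate the local density hypothesis on $G[S]$. The moment just before $|U\cup S_1\cup S_2|$ first reaches $s_1+s_2+\ell$ is the unique point at which both hypotheses become simultaneously applicable, and pinpointing this moment is essentially the entire content of the argument; once the moment is identified, the contradiction follows from straightforward counting using the four structural properties of the algorithm.
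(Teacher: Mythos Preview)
Your proof is correct and follows essentially the same approach as the paper: run the modified DFS and argue by contradiction using properties \ref{fact:bipartite}--\ref{fact:induced path} together with the two hypotheses. The only difference is the choice of stopping time: the paper stops at the first moment when either $|S_1|=s_1$ or $|S_2|=s_2$ and then splits into two cases, whereas you stop at the last moment with $|U\cup S_1\cup S_2|=s_1+s_2+\ell-1$ and handle everything in a single case---a slightly slicker bookkeeping that yields the same contradiction.
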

	
	\begin{proof}
		In order to find the path, we run the algorithm described above with input graphs $G'$ and $G$, with an arbitrary ordering of their vertices $\pi$. Let us show that at the first point when either $|S_1|=s_1$ or $|S_2|=s_2$, $U$ induces a path of length~$\ell$ (observe that such a point exists by the lower bound on $|V(G)|$). By \ref{fact:induced path}, $U$ always induces a path in $G$ and all edges in the path are in $G'$; so this would give us precisely the path from the statement.
		Suppose for the sake of contradiction that $|U|\leq \ell$.

		First, assume that $|S_2|=s_2$ (but $|S_1|<s_1$) at the observed time. Then, by Property~\ref{fact:2k edges} it holds that $|E(G[U\cup S_1\cup S_2])|\geq 2|S_2|= 2s_2$, a contradiction with $|U\cup S_1\cup S_2|< s_1+s_2+\ell$ and our first assumption (on $G$). 
		
		Now, if actually $|S_1|=s_1$ (but $|S_2|<s_2$), by our second assumption (on $G'$) we have that $|N_{G'}(S_1)|\geq s_2+\ell$. But note that by Property~\ref{fact:bipartite} of the algorithm it holds that $N_{G'}(S_1)\subseteq S_2\cup U$ and hence $|N_{G'}(S_1)|<s_2+\ell$, a contradiction. This completes the proof.
	\end{proof}

	\section{Long induced paths in the supercritical regime}\label{sec:Ramsey}
	
	In this section, we prove Theorem~\ref{thm:supercritical}.
	Determining the order of a largest induced path/tree in a random graph is a well-known problem with a long history~\cite{DS:18,EP:83,fernandez-de-la-Vega:86,fernandez-de-la-Vega:96,FJ:87b,FJ:87a,KR:87,luczak:93,LP:88,suen:92}.
	Frieze and Jackson~\cite{FJ:87b} showed that for every sufficiently large $d$, there exists a constant $\alpha(d)>0$ such that whp the random graph $G(n,d/n)$ contains an induced path of length $\alpha(d)n$. {\L}uczak~\cite{luczak:93} and independently Suen~\cite{suen:92} showed that one can take $\alpha(d)\sim \frac{\log d}{d}$ as $d\to \infty$. This is optimal up to a factor $2$, as can be seen by a simple first moment calculation. Recently, the authors~\cite{DGK:21} obtained this ``missing'' factor in the lower bound, thus showing that whp the length of a longest induced path is asymptotically $\frac{2n}{d}\log d$.
	
	Here, we consider the case when $d$ is close to~$1$ (the so-called supercritical regime). {\L}uczak~\cite{luczak:93} and Suen~\cite{suen:92} also showed that for any constant $d>1$ whp there is an induced path of linear length, thus answering a question of Frieze and Jackson~\cite{FJ:87b}. In particular, Suen~\cite{suen:92} showed that one can take $\alpha(d)$ to be any constant smaller than $d^{-1}\int_{1}^d \frac{1-y(\xi)}{\xi}d\xi$, where $y(\xi)$ is the smallest positive root of $y=\eul^{\xi(y-1)}$. From this, one can derive Theorem~\ref{thm:supercritical}. 
	Our goal here is to present a simple proof of this result.
	Suen's proof is also based on a version of the DFS algorithm; in particular, he uses it to find large $m$-ary trees, and then he shows that the depth of one of the trees is large enough to guarantee a long path. Our version of the algorithm, combined with local density considerations, makes the analysis shorter and more straightforward. 
	
	We will need the following (rather standard) definition, which helps us quantify how far the components of a graph are from being trees. 
	\begin{defin}
		For a connected graph $G$, define the \emph{excess} of $G$ as $exc(G)=|E(G)|-|V(G)|+1$. If $G$ has more than one connected component then let $exc(G)$ be the sum of the excesses of each of its components. 
	\end{defin}
	The excess of a random graph in the supercritical regime typically comes overwhelmingly from the excess of its giant component, while the typical size of the giant component in terms of number of edges and number of vertices is well understood. We will use the following lemma (see, for example, Theorems 2.14 and 2.18 in \cite{frieze2016introduction}, and set $c=1+\varepsilon$, for small enough $\varepsilon$).
	\begin{lemma}
		There exists a constant $\varepsilon_0>0$ such that for all positive $\varepsilon<\varepsilon_0$, for the random graph $G\sim G(n,\frac{1+\varepsilon}{n})$ it holds whp that $exc(G)\leq \varepsilon^3n$. 
	\end{lemma}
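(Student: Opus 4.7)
The approach is to decompose the excess $exc(G) = \sum_C(|E(C)|-|V(C)|+1)$ into the contribution from the (whp unique) giant component $C_1$ and the contribution from the remaining ``small'' components, and bound each piece separately.

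For the giant, the plan is to invoke the standard structural results for the barely supercritical $G(n,(1+\varepsilon)/n)$. Theorem~2.14 of \cite{frieze2016introduction} gives whp $|V(C_1)| = (2\varepsilon+O(\varepsilon^2))n$, and Theorem~2.18 (the surplus estimate for the giant) gives whp
\[
|E(C_1)| - |V(C_1)| + 1 = O(\varepsilon^3) n,
\]
which is precisely the excess of $C_1$. Shrinking $\varepsilon_0$ absorbs the implicit constant, making the giant's contribution at most $\tfrac12\varepsilon^3 n$.

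For the non-giant components, I would use the complementary fact (also standard in the cited text) that whp all of them are either trees, contributing $0$ to the excess, or unicyclic, and moreover the total number of unicyclic components outside the giant is only $O(\log n)$ whp. Hence their total excess is $O(\log n)$, which is easily absorbed into the remaining $\tfrac12\varepsilon^3 n$ budget.

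Summing the two bounds gives $exc(G)\leq \varepsilon^3 n$ whp. The main obstacle is the surplus estimate for the giant (Theorem~2.18), since this is the only genuinely nontrivial probabilistic input; the tree/unicyclic picture for the small components is elementary and follows e.g.~from a first moment computation for cycles in components of size $O(\log n)$. Both ingredients are packaged directly in \cite{frieze2016introduction}, so the role of the argument here is simply to combine them.
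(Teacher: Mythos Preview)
Your proposal is correct and matches the paper's own treatment: the paper does not give a self-contained proof either but simply refers to Theorems~2.14 and~2.18 of \cite{frieze2016introduction} (with $c=1+\varepsilon$), after remarking that the excess in the supercritical regime comes essentially from the giant component. Your write-up makes the same decomposition and invokes the same two theorems, merely spelling out the routine fact that the non-giant components contribute negligibly.
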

	
	We are now ready to prove Theorem~\ref{thm:supercritical}. The argument follows closely that of Krivelevich and Sudakov~\cite{KS:13} in the non-induced case. One key idea is to construct the random graph ``on the fly'' while the DFS algorithm is executed. The source of randomness is a sequence of independent Bernoulli random variables which is used to answer the queries made by the algorithm. We use the same notation as in Section~\ref{sec:DFS}.
	
	\lateproof{Theorem~\ref{thm:supercritical}}
	We will run the algorithm defined in the previous section with $G'=G$ and an arbitrary ordering of the vertices $\pi$. We feed the algorithm with a sequence of i.i.d.~random variables $\{X_i\}_{i\in N}$ which follow a Bernoulli distribution with mean $p=\frac{1+\varepsilon}{n}$, where $N=\binom{n}{2}$, so that the $i$-th \emph{new query} of the algorithm is answered positively when $X_i=1$, and otherwise negatively. By new query, we mean a query made to a pair which has not yet been queried before (as in the third step we might query an already exposed pair, and there we just take its previous answer).
	Therefore, the explored graph obviously follows the distribution of $G(n,p)$, so our problem boils down to studying the properties of the random sequence $\{X_i\}_{i\in N}$.
	
	First, let us show that the number of vertices in $S_2$ is always at most the excess of $G$. Each vertex in $S_2$, before leaving $T$, was adjacent to at least two vertices on the path induced by $U$, so it contributes at least one to the excess of $G$, as it adds one vertex but at least two edges to its own connected component. Crucially, notice that the sets of contributing edges for each vertex $S_2$ are disjoint, as the at least two neighbouring vertices in $U$ are never added to $S_2$.
	Since whp $exc(G)\le \varepsilon^3n$, we have the same bound on $|S_2|$ whp. 
	
	Suppose for the sake of contradiction that we always have $|U|\leq \frac{\varepsilon^2n}{5}$.
	For the analysis of the algorithm, we will focus on the pairs $(u,t)$ which were queried when $u$ was in $U$ and $t$ was in $T$, i.e.~the pairs queried in Step~\ref{step2} of the algorithm.
	Let us show that whp at the point when we queried $N_0:=\frac{\varepsilon n^2}{2}$ pairs of this type, then $U$ is of size at least $\frac{\varepsilon^2 n}{5}+1$, which would mean we are done by~\ref{fact:induced path}. Observe that we can assume that at some point we queried $N_0$ pairs of the mentioned type; indeed, when say $|T|=n/2$, by~\ref{fact:bipartite},  we queried at least $|T||S_1|=\frac{n}{2}(\frac{n}{2}-|S_2|-|U|)>\frac{n^2}{8}$ such pairs.
	
	Now, we observe that when we have queried $N_0$ pairs of the mentioned type, then $|S_1\cup S_2|<n/3$; if this is not the case, then at some point before we must have had $|S_1\cup S_2|=n/3$. 
	Since $|T|=n-|S_1|-|S_2|-|U|>n/2$, by~\ref{fact:bipartite} we have queried more than $|T||S_1|>|T|(n/3-\varepsilon^3 n)>n^2/10$ pairs of the observed type, which is larger than $N_0$, a contradiction. So $|S_1\cup S_2|<n/3$.
	
	When we queried precisely $N_0$ of our pairs (in Step~\ref{step2}), the expected number of positive answers among them is $\frac{\varepsilon(1+\varepsilon)n}{2}$, hence, using Chernoff bounds we whp get at least $\frac{\varepsilon(1+\varepsilon)n}{2}-n^{2/3}$ edges among the queried pairs, and hence at least this many vertices in $U\cup S_1\cup S_2$, thanks to Property~\ref{fact:move from T}. Hence, we also have $|S_1|\geq \frac{\varepsilon\left(1+\varepsilon\right)n}{2}-n^{2/3}-\frac{\varepsilon^2 n}{5}-\varepsilon^3n$.

	By~\ref{fact:bipartite} we have at least $|S_1||T|=|S_1|(n-|S_1|-|S_2|-|U|)$ queried pairs of the observed type, so we have:
	\begin{align*}
		N_0&\geq|S_1|\left(n-\varepsilon^3 n-\frac{\varepsilon^2 n}{5}-|S_1|\right)\\
		&\geq  \left(\frac{\varepsilon\left(1+\varepsilon\right)n}{2}-n^{2/3}-\frac{\varepsilon^2 n}{5}-\varepsilon^3n\right)\left(n-\frac{\varepsilon\left(1+\varepsilon\right)n}{2}+n^{2/3}\right)\\
		&> \frac{\varepsilon n^2}{2}+\frac{\varepsilon^2n^2}{20}-O(\varepsilon^3)n^2
	\end{align*}
	(where the second inequality uses $\frac{\varepsilon\left(1+\varepsilon\right)n}{2}-n^{2/3}-\frac{\varepsilon^2 n}{5}-\varepsilon^3n\leq |S_1|<n/3$, so the product grows with $|S_1|$), contradicting the assumption on $N_0$ for all small enough $\varepsilon>0$, which completes the proof.
	\endproof{}

	\section{Induced size-Ramsey number of paths}\label{sec:supercritical}
	
	The size-Ramsey number of $H$, denoted by $\hat{R}(H)$, is  the smallest number $m$ such that there exists a graph $G$ on $m$ edges with the property that for every 2-colouring of the edges of $G$, there is a monochromatic copy of $H$ in $G$. This notion was introduced by  Erd\H{o}s, Faudree,
	Rousseau and Schelp \cite{erdHos1978size}, and over the past few decades there has been a lot of research devoted to studying this and other related Ramsey functions. One of the classical problems posed by Erd\H{o}s was to determine the order of magnitude of $\hat{R}(P_n)$, and he actually conjectured that $\frac{\hat{R}(P_n)}{n}\rightarrow \infty$, which was disproved by Beck~\cite{beck1983size} who showed $\hat{R}(P_n)=O(n)$. Since then, there has been a series of papers concerned with giving more precise bounds on $\hat{R}(P_n)$; for lower bounds see \cite{bal2019new,beck1983size,B2001,dudek2017some}, and for upper bounds see \cite{beck1983size,B2001,dudek2015alternative,dudek2017some,letzter2016path}. The current records for lower and upper bounds are given by  Bal and DeBiasio~\cite{bal2019new}, and by Dudek and Pra{\l}at~\cite{dudek2017some}, respectively:
	$$
	(3.75+o(1))n\leq \hat{R}(P_n)\leq 74n.
	$$
	
	For the $k$-colour version of the size-Ramsey number of paths, almost tight asymptotic bounds are known in terms of $k$ \cite{dudek2015alternative,dudek2017some,krivelevich2019expanders,krivelevich2019long}:
	$$\Omega(k^2) n\leq \hat{R}^k(P_n)\leq O(k^2\log k)n.$$

	Concerning the induced size-Ramsey number of paths, Haxell, Kohayakawa and {\L}uczak~\cite{HKL:95} showed that $\hat{R}^k_{\ind}(P_n)$ is linear for any fixed $k$, but no reasonably small constant can be extracted from their proof even if $k=2$,
	as it relies on the regularity lemma.\footnote{On the other hand, their argument actually gives the stronger result that $\hat{R}^k_{\ind}(C_n)=O(n)$ for any fixed $k$.} We improve upon this considerably, showing that $\hat{R}_{\ind}^2(P_n)\leq 5\cdot 10^7n$ and $\hat{R}_{\ind}^k(P_n)\leq O(k^3\log^4 k)n$.
	As in previous proofs, our ``host graph'' will be a sparse random graph $G(n,c/n)$, where $c$ is a sufficiently large constant. We have already seen in the last section that whp there is an induced path of linear length. The additional challenge here is to guarantee such a path even if an adversary may delete half of the edges, say. 
	Fortunately, the DFS algorithm presented in Section~\ref{sec:DFS} is very robust and does not require the full randomness of the host graph, but performs well in ``locally sparse'' graphs with a mild expansion property (cf.~Theorem~\ref{thm:local considerations}). After a simple cleaning step, we can always guarantee such a pseudorandom graph in the densest colour class. Hence, our results are density-type results, i.e.~we prove that a subset of edges forming an appropriate percentage of the whole graph contains a long path induced in the host graph.

	\subsection{The two-colour result}
	We first show a simple lemma which collects several useful properties of a random graph with parameters tailored for the proof of Theorem~\ref{thm:ramsey}.
	
	\begin{lemma}\label{auxiliary}
		Let $G\sim G(n,64/n)$. Then $G$ has the following properties whp.
		\begin{enumerate}
			\item Every vertex set $S$ of size at most $\frac{196n}{10^7}$ spans less than $\frac{12}{7}|S|$ edges.
			\item Every two disjoint vertex sets $S,T$ of sizes $|S|=\frac{21n}{10^7}$ and $|T|\le \frac{175n}{10^7}$ satisfy $e(S,T)<\frac{95}{7}|S|$.
			\item $G$ has $(1+o(1))32n$ edges and $\Theta(n)$ isolated vertices.
		\end{enumerate}
	\end{lemma}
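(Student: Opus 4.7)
All three items are standard first-moment/concentration properties of $G(n,c/n)$ with $c=64$, and the plan is to prove each by a union bound combined with either Markov's or Chernoff's inequality. Parts (1) and (2) are small-subgraph and edge-discrepancy statements handled by ``expected count $\to 0$'' calculations; part (3) is a pair of standard concentration results.

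For (1), fix $s$ and $k = \lceil 12s/7\rceil$. Using the bounds $\binom{n}{s}\le(en/s)^s$, $\binom{\binom{s}{2}}{k}\le(es^2/(2k))^k$, and $p=64/n$, a short manipulation gives
$$\binom{n}{s}\binom{\binom{s}{2}}{k}p^k \;\le\; \left[\,e\cdot\left(\tfrac{56e}{3}\right)^{12/7}\!\cdot\left(\tfrac{s}{n}\right)^{5/7}\,\right]^{s}.$$
The threshold $s\le 196n/10^7$ has been chosen precisely so that the bracket is bounded away from $1$; then the expression decays exponentially in $s$, and summing over $s$ (with the trivial small cases $s\le 4$ dispatched by noting $\binom{s}{2}<12s/7$ makes the event impossible) yields $o(1)$.

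For (2), the analogous union bound with $|S|=s_1:=21n/10^7$, $|T|=t\le 175n/10^7$ and $m=\lceil 95s_1/7\rceil$ gives an expected bad-pair count at most
$$\sum_{t}\binom{n}{s_1}\binom{n}{t}\binom{s_1 t}{m}p^m,$$
and applying $\binom{a}{b}\le(ea/b)^b$ to each factor together with $p=64/n$ makes each summand of order $\exp(-\Omega(n))$; since $t$ takes only $O(n)$ values this is still $o(1)$. For (3), $e(G)\sim\mathrm{Bin}(\binom{n}{2},p)$ has mean $(1+o(1))32n$ and concentrates by Chernoff; each vertex is isolated with probability $(1-p)^{n-1}=(1+o(1))\eul^{-64}$, so the expected number of isolated vertices is $\Theta(n)$, and Chebyshev's inequality gives whp concentration because the events ``$v$ isolated'' and ``$w$ isolated'' have covariance only of order $p/n$.

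The only real obstacle is arithmetic: one must verify that, with the specific constants $64$, $196/10^7$, $21/10^7$, $175/10^7$, $12/7$ and $95/7$, each exponential base arising in parts (1) and (2) is strictly below $1$. These constants have clearly been tailored to match the hypotheses of Theorem~\ref{thm:local considerations} that will be applied later in the proof of Theorem~\ref{thm:ramsey}; the verification is routine but tight, and essentially forces the choice of the numerical thresholds.
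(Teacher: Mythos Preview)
Your proposal is correct and follows essentially the same route as the paper: first-moment union bounds with the estimate $\binom{a}{b}\le(ea/b)^b$ for parts (1) and (2), and standard concentration for part (3). The only cosmetic differences are that the paper fixes $|T|$ equal to its maximum value in (2) rather than summing over $t$ (noting this suffices since enlarging $T$ can only increase $e(S,T)$), and the paper carries out the numerical verification you flag as ``routine but tight'' --- showing the base in (1) is at most $2280\,(196/10^7)^{5/7}<0.99$ and the bound in (2) is at most $(1-10^{-7})^n$.
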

	
	\begin{proof}
		\begin{enumerate}
			\item 
			Let $p=64/n$ and let $t=\frac{196n}{10^7}$. We bound the probability of the existence of a set $S$ of size at most $t$ which spans at least $\frac{12}{7}|S|$ edges by using the following simple union bound
			\begin{align*}
				\sum_{i\le t }\binom{n}{i}\binom{\binom{i}{2}}{\frac{12}{7}i}\cdot p^{\frac{12}{7}i}\le
				\sum_{i\le t}\left(\frac{en}{i}\right)^i\cdot \left(\frac{7eip}{24}\right)^{\frac{12}{7}i}
				&=\sum_{i\le t}\left[\frac{en}{i}\cdot\left(\frac{7\cdot 64ei}{24n}\right)^{\frac{12}{7}}\right]^i\\
				&\le \sum_{i\le t}\left[2280 \left(\frac{i}{n}\right)^\frac{5}{7}\right]^i
				\,.
			\end{align*}
			The part of the sum for $i<\sqrt{n}$ is dominated by $\sum_{i\le \sqrt{n}}\left(\frac{1}{n^{1/4}}\right)^i\rightarrow 0$. Otherwise, the $i$-th summand is bounded by $\left(2280\left(\frac{196}{10^7}\right)^{5/7}\right)^i<0.99^i=o(1/n)$, which finishes the proof.
			\item We again use a union bound, now over all disjoint $S$ and $T$ of sizes $s=\frac{21n}{10^7}$ and $t=\frac{175n}{10^7}$ respectively (note that it suffices to consider sets $T$ of size exactly $t$). We get that the probability of a bad outcome is at most
			\begin{align*}
				\binom{n}{s}\binom{n}{t}\binom{ts}{95s/7}p^{95s/7}&\leq \left(\frac{10^7ne}{21n}\right)^{\frac{21n}{10^7}}
				\left(\frac{10^7ne}{175n}\right)^{\frac{175n}{10^7}}
				\left(\frac{7\cdot 175ne}{95\cdot 10^7}\right)^{95s/7}\left(\frac{64}{n}\right)^{95s/7}\\
				&= \left[\left(\frac{10^7e}{21}\right)^{\frac{21}{10^7}}
				\left(\frac{10^7e}{175}\right)^{\frac{175}{10^7}}
				\left(\frac{1225e\cdot64}{95\cdot 10^7}\right)^{\frac{95\cdot 21}{7\cdot 10^7}}\right]^n<(1-10^{-7})^n.
			\end{align*}
			\item These are standard facts, so we omit the proofs.
		\end{enumerate}
	\end{proof}

	
	\lateproof{Theorem~\ref{thm:ramsey}}
	We will show that for large enough $n$ we have that $\hat{R}_{\ind}(P_{7n/10^7})\leq (1+o(1))32n$, which gives $\hat{R}_{\ind}(P_{n})\leq 5\cdot 10^{7}n$.
	
	For large enough $n$, let $G$ be a fixed graph on $n$ vertices which satisfies all the properties given by Lemma~\ref{auxiliary}. Let $\ell=\frac{7n}{10^7}$, $s_1=3\ell$, and $s_2=24\ell$; these are the parameters which we will use when applying Theorem~\ref{thm:local considerations}.
	
	Consider an arbitrary 2-colouring of $G$ and let $G_1$ be the subgraph induced by the majority colour (and containing no isolated vertices); note that $G_1$ is of order at most
	$(1-\varepsilon)n$ for some fixed $\varepsilon>0$, and has at least $(1-o(1))16n$ edges.  
	Let $G'$ be the graph obtained from $G_1$ by successively removing vertices of degree at most $16$, for as long as there are such vertices. $G'$ is not empty, as otherwise $G_1$ contains at most $(1-\varepsilon)16n$ edges.
	Furthermore, we have that $|E(G')|\ge \delta(G')|V(G')|/2\geq \frac{17}{2}|V(G')|$, so by Property 1 of Lemma~\ref{auxiliary} we have that $|V(G')|>\frac{196n}{10^7}=\ell+s_1+s_2$.
	
	We will apply Theorem~\ref{thm:local considerations} to the graphs $G'$ and $G[V(G')]$. Notice that Property 1 from Lemma~\ref{auxiliary} translates directly to the first condition of Theorem~\ref{thm:local considerations}; let us now show that the second condition is also satisfied.
	Suppose towards a contradiction that there is a set $S\subseteq V(G')$ such that $|S|=s_1=\frac{21n}{10^7}$ and $|N_{G'}(S)|< s_2+\ell=\frac{175n}{10^7}$. Note that $$e_G(S,N_{G'}(S))\geq \delta(G')|S|-2\cdot e_{G'}(S)\geq 17|S|-2\frac{12}{7}|S|\geq \frac{95}{7}|S|,$$ where the second inequality follows from Property 1; this gives a contradiction with Property 2 applied to the sets $S$ and $N_{G'}(S)$. So we can apply Theorem~\ref{thm:local considerations}, and find the required monochromatic path in $G'$, which is induced in $G$. Since $G$ has at most $(1+o(1))32n$ edges, and we can always find an induced path of length $\ell=\frac{7n}{10^7}$ in any 2-colouring of $E(G)$, this gives the required bound on $\hat{R}_{\ind}(P_{7n/10^7})$.
	\endproof{}

	\subsection{The multicolour result}
	In this section we again show an auxiliary lemma about random graphs with certain parameters, which is then used to prove Theorem~\ref{thm:multicolour}.
	
	\begin{lemma}\label{auxiliary-multicolour}
		There exists $c>100$ such that for all $k\ge e^{13}$ the following holds whp for $G\sim G(kn,\frac{c\log k}{n})$. 
		\begin{enumerate}
			\item Every vertex set $S$ of size at most $\frac{2n}{c^3k\log^2k}$ spans less than $\frac{2\log k}{\log k+2}|S|$ edges.
			\item Every two vertex sets $S,T$ of sizes $|S|=\frac{n}{c^3k\log^3k}$ and $|T|\le \frac{2n}{c^3k\log^2 k}$ satisfy $e(S,T)<8|S|\log k$.
			\item $G$ has $(\frac{1}{2}+o(1))cnk^2\log k$ edges.
		\end{enumerate}
	\end{lemma}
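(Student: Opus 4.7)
The three items all follow by standard first-moment arguments in the spirit of Lemma~\ref{auxiliary}. Part~3 is immediate from Chernoff's inequality: the number of edges is $\mathrm{Bin}(\binom{kn}{2},p)$ with mean $(1+o(1))\frac{cnk^2\log k}{2}$, and concentration is routine. For Parts~1 and~2 the plan is to union-bound the expected number of bad configurations and show, once $c$ is a sufficiently large absolute constant, that this expectation is $o(1)$.

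For Part~1, set $\beta=\frac{2\log k}{\log k+2}$ (so $\beta<2$ but $\beta\to 2$) and $t=\frac{2n}{c^3k\log^2k}$. The expected number of sets $S$ with $|S|=i\le t$ spanning at least $\beta i$ edges is at most
\begin{align*}
\sum_{i=1}^{t}\binom{kn}{i}\binom{\binom{i}{2}}{\beta i}p^{\beta i}\le \sum_{i=1}^{t}\left[ek\left(\tfrac{ec\log k}{2\beta}\right)^{\beta}\left(\tfrac{i}{n}\right)^{\beta-1}\right]^{i}.
\end{align*}
Since $\beta-1>0$, the bracket is monotone in $i$, and a direct calculation (using $\beta\to 2$) shows that at $i=t$ it equals $\Theta(1/c)$, which for $c>100$ is comfortably below $1/20$. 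Splitting the sum at $i=\sqrt{n}$ exactly as in Lemma~\ref{auxiliary} then yields an $o(1)$ bound, since for $i\le\sqrt{n}$ the bracket itself tends to $0$ with $n$, while for $i\ge\sqrt{n}$ the summand is exponentially small.

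For Part~2, set $s=\frac{n}{c^3k\log^3k}$ and union-bound over disjoint pairs $(S,T)$ with $|S|=s$ and $|T|=t$:
\begin{align*}
\binom{kn}{s}\binom{kn}{t}\binom{st}{8s\log k}p^{8s\log k}\le\left[\tfrac{ekn}{s}\cdot\left(\tfrac{ekn}{t}\right)^{t/s}\cdot\left(\tfrac{etp}{8\log k}\right)^{8\log k}\right]^{s}.
\end{align*}
Using $t/s=2\log k$ and plugging in the values, the middle factor contributes $(\Theta(c^3k^2\log^2k))^{2\log k}$ and the third $(\Theta(1/(c^2k\log^2k)))^{8\log k}$; their product scales like $c^{-10\log k}k^{-4\log k}\log^{-12\log k}k$, which dwarfs the polynomial prefactor $ekn/s=\Theta(c^3k^2\log^3 k)$. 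Raising to the $s$-th power gives a bound of $\exp(-\Omega(s\log^2 k))=o(1)$.

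The main obstacle is purely bookkeeping: the choices $\beta=\frac{2\log k}{\log k+2}$ in Part~1 and the $8s\log k$-edge threshold in Part~2 must be tuned so that, after extracting $i$-th and $s$-th roots respectively, the dominant powers of $\log k$ cancel and the surviving $c$-powers appear in the denominator. The conditions $c>100$ and $k\ge e^{13}$ are exactly what makes each resulting base strictly less than $1$, so that all union bounds sum to $o(1)$.
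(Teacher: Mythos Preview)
Your proposal is correct and follows essentially the same approach as the paper: standard union bounds with the estimates $\binom{m}{j}\le(em/j)^j$, the same split at $i=\sqrt{n}$ in Part~1, and Chernoff for Part~3. The only cosmetic difference is in Part~2, where the paper uses the crude bound $\binom{kn}{s}\le\binom{kn}{t}$ and the identity $8s\log k=4t$ to collapse everything to a single base raised to the power $2t$, namely $\bigl(\tfrac{c^2e^3kt}{64n}\bigr)^{2t}$, whereas you keep the three factors separate and extract the exponent~$s$; both computations are valid and give the same conclusion.
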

	
	\begin{proof}
		We choose $c$ to be a large enough constant not depending on $k$. 
		\begin{enumerate}
			\item 
			Let $t=\frac{2n}{c^3k\log^2k}$ and let $\alpha=\frac{2\log k}{\log k+2}>\frac{5}{3}$ and note that $\alpha<2$, and let $p=\frac{c\log k}{n}$. As before, we bound the probability of the existence of a set $S$ of size at most $t$ which spans at least $\alpha|S|$ edges:
			\begin{align*}
				&\sum_{i\le t }\binom{kn}{i}\binom{\binom{i}{2}}{\alpha i}\cdot p^{\alpha i}\le
				\sum_{i\le t}\left(\frac{ekn}{i}\right)^i\cdot \left(\frac{eip}{2\alpha}\right)^{\alpha i}
				=\sum_{i\le t}\left[\frac{ekn}{i}\cdot\left(\frac{cei\log k }{2\alpha n}\right)^{\alpha}\right]^i\\
				&\leq \sum_{i\le t}\left[10c^2k\log^2k \left(\frac{i}{n}\right)^{\alpha-1}\right]^i
				\leq\sum_{i\le t}\left[10c^2k\log^2k \left(\frac{2}{c^3k\log^2 k}\right)^{1-\frac{4}{\log k}}\right]^i
				\,.
			\end{align*}
			The part of the sum for $i<\sqrt{n}$ we crudely bound by $\sum_{i\le \sqrt{n}}\left(\frac{1}{n^{1/4}}\right)^i\rightarrow 0$, by looking at the penultimate sum above and using the bound on $i$ and $\alpha> \frac{5}{3}$. For the remaining part of the sum, the $i$-th summand is bounded by $\left(\frac{1}{2}\right)^i=o(1/n)$ since $1-\frac{4}{\log k}>2/3$ and $c$ is large enough, which finishes the proof.
			\item Using a union bound over all disjoint $S$ and $T$ of sizes $s=\frac{n}{c^3k\log^3k}$ and $t=\frac{2n}{c^3k\log^2 k}$ respectively, we get that the probability of a bad outcome is at most
			\begin{align*}
				\binom{kn}{s}\binom{kn}{t}\binom{ts}{8s\log k}p^{8s\log k}&\leq \left(\frac{ekn}{t}\right)^{2t}
				\left(\frac{et}{8\log k}\right)^{4t}
				\left(\frac{c\log k}{n}\right)^{4t}=\left(\frac{c^2e^3kt}{2^6n}\right)^{2t}\rightarrow 0.
			\end{align*}
			\item This is a standard fact, so we omit the proof.
		\end{enumerate}
	\end{proof}
	
	\lateproof{Theorem~\ref{thm:multicolour}}
	Obviously, we can assume that $k$ is large enough, so let $k\ge e^{13}$.
	The proof will follow from the previous lemma and Theorem~\ref{thm:local considerations}, along the lines of the proof of Theorem~\ref{thm:ramsey}, by using the parameters $\ell=s_1=\frac{n}{c^3k\log^3k}$ and $s_2=\frac{n}{c^3k\log^2k}$. Indeed, let $c$ be given by Lemma~\ref{auxiliary-multicolour}, now fix any $k\ge e^{13}$ and let $n$ be large enough such that there exists a graph $G$ on $kn$ vertices which has the three properties given by the previous lemma.
	
	Fix any $k$-colouring of the edges of $G$, and let $G_1$ be the graph induced by the densest colour class. Notice that $G_1$ has at least $ckn\log k/4$ edges, so the average degree in $G_1$ is at least $d=c\log k/2$. Now we obtain the graph $G'$
	from $G_1$ by successively removing all vertices of degree at most $d/4$ until there are none. It is easy to show, by using the first property from Lemma~\ref{auxiliary-multicolour}, that $G'$ has at least $s_1+s_2+\ell$ vertices. 
	As in the proof of Theorem~\ref{thm:ramsey}, we want to show that the two conditions of Theorem~\ref{thm:local considerations} hold for the graphs $G'$ and $G[V(G')]$.
	
	The first one directly follows from the first property of $G$.
	For the second one, let $S\subseteq V(G')$ with $|S|=s_1$, and suppose $|N_{G'}(S)|<s_2+\ell<2s_2$. We reach a contradiction again as in the proof of Theorem~\ref{thm:ramsey}, by using the first and second property of $G$, and the minimum degree condition on $G'$. Hence we can apply Theorem~\ref{thm:local considerations}, and get an induced path in $G'$ of length $\ell$, which completes the proof. 
	\endproof

	\providecommand{\bysame}{\leavevmode\hbox to3em{\hrulefill}\thinspace}
	\providecommand{\MR}{\relax\ifhmode\unskip\space\fi MR }
	\providecommand{\MRhref}[2]{%
		\href{http://www.ams.org/mathscinet-getitem?mr=#1}{#2}
	}
	\providecommand{\href}[2]{#2}

\end{document}